\newcounter{theoremUnified} 
\numberwithin{theoremUnified}{section} 
\numberwithin{theoremUnified}{section} 
\newtheoremstyle{plainStyle} 
{2mm} 
{2mm} 
{} 
{} 
{\bfseries} 
{.} 
{.5em} 
{} 
\newtheoremstyle{italicStyle} 
{2mm} 
{2mm} 
{\itshape} 
{} 
{\bfseries} 
{.} 
{.5em} 
{} 
\def\backgrnd{black!10}	
\tikzstyle{place}=
\tikzstyle{antiplace}=
\tikzstyle{transition}=
\tikzstyle{inarrow}=[->, >=stealth, shorten >=.03cm,line width=1.5]
\tikzstyle{antiarrow}=[<-, red!75,  >=stealth, shorten >=.03cm,line width=1.5]
\tikzset{
	pics/netA/.style args={#1/#2/#3/#4/#5/#6/#7}{code={

					\node [place,label=above:$p_1$, tokens={%
								#1
							}] (-pl_1) {};

					\node [transition,label=above:$t$, label=below:#5] (-tr_1) [right = of -pl_1] {};

					\node [place,label=above:$p_2$, tokens={%
								#2
							}] (-pl_2) [right = of -tr_1] {};

					\node [transition,label=left:$v$, label=above:#6] (-tr_2) [below = of -tr_1] {};
					\node [transition,label=below:$u$, label=above:#7] (-tr_3) [below = of -tr_2] {};

					\node [place,label=below:$p_3$, tokens={%
								#3
							}] (-pl_3) [left = of -tr_3] {};

					\node [place,label=below:$p_4$, tokens={%
								#4
							}] (-pl_4) [right = of -tr_3] {};

					\draw[inarrow] (-pl_1) -- (-tr_1);
					\draw[inarrow] (-tr_1) -- (-pl_2);
					\draw[inarrow] (-pl_2) -- (-tr_2);
					\draw[inarrow] (-tr_2) -- (-pl_3);
					\draw[inarrow] (-tr_2) -- (-pl_4);
					\draw[inarrow] (-pl_3) -- (-tr_3);
					\draw[inarrow] (-tr_3) -- (-pl_4);
				}}
}
\newcommand{\Msets}[1]{{#1}^{\oplus}} 
\newcommand{\Str}[1]{{#1}^{\otimes}} 
\newcommand{\Cp}{\fatsemi} 
\newcommand{\Homtotal}[1]{\operatorname{Hom}_{\,#1}} 
\newcommand{\Id}[1]{\text{id}_{#1}} 
\newcommand{\Fwd}[1]{{#1}^+}
\newcommand{\Bwd}[1]{{#1}^-}
\newcommand{\FFwd}[1]{{#1}^{+\!\!+}}
\newcommand{\BBwd}[1]{{#1}^=}
\newcommand{\FBwd}[1]{{#1}^\pm}
\newcommand{\BFwd}[1]{{#1}^\mp}
\newcommand{\CategoryC}{\mathcal{C}}
\newcommand{\CategoryD}{\mathcal{D}}
\newcommand{\FCSMC}{\mathbf{FCSMC}} 
\newcommand{\FSSMC}{\mathbf{FSSMC}} 
\newcommand{\PetriS}[1]{\mathbf{Petri}^{#1}} 
\newcommand{\PetriCommBound}{\PetriS{\Span}_B} 
\newcommand{\PetriFreeBound}{\PetriS{\Span}_B} 
\newcommand{\PetriSpan}{\PetriS{\Span}} 
\newcommand{\Fun}[1]{{{#1}^\sharp}} 
\newcommand{\NetSem}[1]{\left( #1, \Fun{#1}\right)} 
\newcommand{\Semantics}{\mathcal{S}} 
\let\epsilon\varepsilon
\newcommand{\Free}[1]{\mathfrak{F}\left(#1\right)} 
\newcommand{\FreeB}[2][]{\mathfrak{F}^{#1}_B\left(#2\right)} 
\newcommand{\Comm}[1]{\mathfrak{C}\kern-.1em\left(#1\right)} 
\newcommand{\CommB}[1]{\mathfrak{C}_B\kern-.2em\left(#1\right)} 
\newcommand{\Grothendieck}[1]{\textstyle\int{#1}} 
\newcommand{\GrothendieckS}[1]{\Grothendieck{\Fun{#1}}} 
\newcommand{\harpvecsign}{\scriptscriptstyle\rightharpoonup}
\newcommand{\harpoonvec}[2]{%
	\ifx\displaystyle#1\doalign{$\harpvecsign$}{#1#2}\fi
	\ifx\textstyle#1\doalign{$\harpvecsign$}{#1#2}\fi
	\ifx\scriptstyle#1\doalign{\scalebox{.6}[.9]{$\harpvecsign$}}{#1#2}\fi
	\ifx\scriptscriptstyle#1\doalign{\scalebox{.5}[.8]{$\harpvecsign$}}{#1#2}\fi
}
\newcommand{\doalign}[2]{%
	{\vbox{\offinterlineskip\ialign{\hfil##\hfil\cr#1\cr$#2$\cr}}}%
}
\newcommand{\Span}{\mathbf{Span}} 
\newcommand{\Cat}{\mathbf{Cat}} 
\newcommand{\Suchthat}[2]{\left\{#1 \: \middle\vert \: #2\right\}} 
\tikzset{snek/.style={decorate, decoration={snake, amplitude=2pt}}}
\tikzset{ 
	oriented WD/.style={
			every to/.style={
					out=0,in=180,draw
				},
			label/.style={
					font=\everymath\expandafter{\the\everymath\scriptstyle},
					inner sep=0pt,
					node distance=2pt and -2pt
				},
			semithick,
			node distance=1 and 1,
			decoration={
					markings, mark=at position \stringdecpos with \stringdec
				},
			ar/.style={
					postaction={decorate}
				},
			execute at begin picture={
					\tikzset{
						x=\bbx, y=\bby,
						every fit/.style={
								inner xsep=\bbx, inner ysep=\bby
							}
					}
				}
		},
	string decoration/.store in=\stringdec,
	string decoration={
			\arrow{stealth};
		},
	string decoration pos/.store in=\stringdecpos,
	string decoration pos=.7,
	bbx/.store in=\bbx,
	bbx = 1.5cm,
	bby/.store in=\bby,
	bby = 1.5ex,
	bb port sep/.store in=\bbportsep,
	bb port sep=1.5,
	bb port length/.store in=\bbportlen,
	bb port length=4pt,
	bb penetrate/.store in=\bbpenetrate,
	bb penetrate=0,
	bb min width/.store in=\bbminwidth,
	bb min width=1cm,
	bb rounded corners/.store in=\bbcorners,
	bb rounded corners=2pt,
	bb small/.style={
			bb port sep=1,
			bb port length=2.5pt,
			bbx=.4cm, bb min width=.4cm,
			bby=.7ex
		},
	bb medium/.style={
			bb port sep=1,
			bb port length=2.5pt,
			bbx=.4cm,
			bb min width=.4cm,
			bby=.9ex
		},
	bb/.code 2 args={
			\pgfmathsetlengthmacro{\bbheight}{\bbportsep * (max(#1,#2)+1) * \bby}
			\pgfkeysalso{
				draw,
				minimum height=\bbheight,
				minimum width=\bbminwidth,
				outer sep=0pt,
				rounded corners=\bbcorners,
				thick,
				prefix after command={
						\pgfextra{\let\fixname\tikzlastnode}
					},
				append after command={
						\pgfextra{
							\draw
							\ifnum #1=0
								{}
							\else
								foreach \i in {1,...,#1} {
								($(\fixname.north west)!{\i/(#1+1)}!(\fixname.south west)$) +(-
								\bbportlen,0)
								coordinate (\fixname_in\i) -- +(\bbpenetrate,0) coordinate (\fixname_in\i')
								}
							\fi
							\ifnum
								#2=0{}
							\else
								foreach \i in {1,...,#2} {
								($(\fixname.north east)!{\i/(#2+1)}!(\fixname.south east)$) +(-
								\bbpenetrate,0)
								coordinate (\fixname_out\i') -- +(\bbportlen,0) coordinate (\fixname_out\i)
								}
							\fi;
						}
					}
			}
		},
	bb name/.style={
			append after command={
					\pgfextra{
						\node[anchor=north] at (\fixname.north) {#1}
						;}
				}
		}
}
\def\lrc{
\begin{tikzpicture}[scale=.33]
	\draw[-] (0,0) -| (1,1);
\end{tikzpicture}
}
\newcommand{\pb}{\arrow[dr, phantom, "\lrc", very near start]}
\tikzset{commutative diagrams/.cd,arrow style=tikz,diagrams={>=stealth'}}
\newlength{\seplen}
\newlength{\sepwid}
\def\firstblank{\,\rule{\seplen}{\sepwid}\,}
\newtheorem{notation}{Notation}{\rmfamily}{\rmfamily}
\newtheorem{definition}{Definition}{\rmfamily}{\rmfamily}
\newtheorem{proposition}{Proposition}{\rmfamily}{\rmfamily}
\newtheorem{remark}{Remark}{\rmfamily}{\rmfamily}
\newtheorem{theorem}{Theorem}{\rmfamily}{\rmfamily}
\newtheorem{example}{Example}{\rmfamily}{\rmfamily}
{\rmfamily}{\rmfamily}
\theoremstyle{plainStyle}
\begin{document}
\title{A Categorical Semantics for Bounded Petri Nets}

\author{Fabrizio Genovese 
 \email{0000-0001-7792-1375}
 \institute{University of Pisa \& Statebox}
 \email{fabrizio.romano.genovese@gmail.com}
 \and
 Fosco Loregian 
 \email{0000-0003-3052-465X}
 \institute{Tallinn University of Technology}
 \email{fosco.loregian@gmail.com}
 \and
  Daniele Palombi 
  \email{0000-0002-8107-5439}
  \institute{Sapienza University of Rome}
  \email{danielepalombi@protonmail.com}
}

\def\titlerunning{Bounded Petri Nets}
\def\authorrunning{Genovese, Loregian, Palombi}

%
\maketitle
\begin{abstract}
  We provide a categorical semantics for bounded
  Petri nets, both in the collective- and individual-token
  philosophies. In both cases, we describe the process of bounding
  a net both internally (just constructing new categories of executions
  of a net using comonads) and externally (using lax-monoidal-lax functors).
  Our external semantics is non-local, meaning that tokens
  are endowed with properties that say something about the global state
  of the net. We then prove, in both cases, that the internal and external
  constructions are equivalent, by using machinery built on top of the
  Grothendieck construction. The individual-token case is harder, as it requires
  a more explicit reliance on abstract methods.
\end{abstract}
\paragraph*{\bf Acknowledgements}
The first author was supported by the project MIUR PRIN 2017FTXR7S “IT-MaTTerS” and by the \href{https://gitcoin.co/grants/1086/independent-ethvestigator-program}{Independent Ethvestigator Program}.

The second author was supported by the ESF funded Estonian IT Academy research measure (project 2014-2020.4.05.19-0001).

\bigskip\noindent
A video presentation of this paper can be found on Youtube at \href{https://www.youtube.com/watch?v=mv4dyTNgz60}{mv4dyTNgz60}.

\section{Introduction}\label{sec: introduction}
A Petri net is a simple thing: it consists of \emph{places} -- depicted as circles -- and
\emph{transitions} -- depicted as squares and connected to places by inbound and outbound arrows.
Places are seen as holding resources, called \emph{tokens} and represented by black circles, 
while transitions are seen as processes that convert a pre-determined amount of some resource 
into a pre-determined amount of some other resource, as prescribed by the connecting arrows.
The action of a transition on tokens is called \emph{firing}. Sequences of firings are
called \emph{executions}, with an example depicted below.
\begin{equation*}
  \scalebox{0.75}{
	\begin{tikzpicture}
		\begin{scope}
			\begin{pgfonlayer}{nodelayer}
				\node [place,tokens=1] (1a) at (1.5,1.25) {};
				\node [place,tokens=1] (1b) at (-0,0) {};
				\node [place,tokens=1] (3a) at (1.5,0) {};
				\node[transition] (2a) at (0,1.25) {};
				\node[transition] (2b) at (0,-1.25) {};
			\end{pgfonlayer}
			\begin{pgfonlayer}{edgelayer}
				\draw[style=inarrow, thick] (1a) to (2a);
				\draw[style=inarrow, thick] (1b) to (2a);
				\draw[style=inarrow, thick, bend right] (2b) to (1b);
				\draw[style=inarrow, thick, bend left] (2b) to (1b);

				\draw[style=inarrow, thick, bend left] (2a) to (3a);
				\draw[style=inarrow, thick, bend left] (3a) to (2b);
			\end{pgfonlayer}
		\end{scope}

		\begin{scope}[xshift=135]
			\begin{pgfonlayer}{nodelayer}
				\node [place,tokens=0] (1a) at (1.5,1.25) {};
				\node [place,tokens=0] (1b) at (-0,0) {};
				\node [place,tokens=2] (3a) at (1.5,0) {};
				\node[transition] (2a) at (0,1.25) {};
				\node[transition] (2b) at (0,-1.25) {};
			\end{pgfonlayer}
			\begin{pgfonlayer}{edgelayer}
				\draw[style=inarrow, thick] (1a) to (2a);
				\draw[style=inarrow, thick] (1b) to (2a);
				\draw[style=inarrow, thick, bend right] (2b) to (1b);
				\draw[style=inarrow, thick, bend left] (2b) to (1b);

				\draw[style=inarrow, thick, bend left] (2a) to (3a);
				\draw[style=inarrow, thick, bend left] (3a) to (2b);
			\end{pgfonlayer}
		\end{scope}

		\begin{scope}[xshift=270]
			\begin{pgfonlayer}{nodelayer}
				\node [place,tokens=0] (1a) at (1.5,1.25) {};
				\node [place,tokens=2] (1b) at (-0,0) {};
				\node [place,tokens=1] (3a) at (1.5,0) {};
				\node[transition] (2a) at (0,1.25) {};
				\node[transition] (2b) at (0,-1.25) {};
			\end{pgfonlayer}
			\begin{pgfonlayer}{edgelayer}
				\draw[style=inarrow, thick] (1a) to (2a);
				\draw[style=inarrow, thick] (1b) to (2a);
				\draw[style=inarrow, thick, bend right] (2b) to (1b);
				\draw[style=inarrow, thick, bend left] (2b) to (1b);

				\draw[style=inarrow, thick, bend left] (2a) to (3a);
				\draw[style=inarrow, thick, bend left] (3a) to (2b);
			\end{pgfonlayer}
		\end{scope}

		\draw[style=inarrow, thick] (2.15,0) -- (2.65,0);
		\draw[style=inarrow, thick] (7.05,0) -- (7.55,0);

	\end{tikzpicture}
}
\end{equation*}
This innocent definition~\cite{Petri2008} has proven very useful in domains ranging
from concurrency\cite{Jensen2009,Riemann1999},
where it is used to represent resource flow in distributed computing, to
chemistry~\cite{Reisig2013, Baez2017}, where
it models chemical reactions. In both cases, it becomes important to characterize which
sequences of transitions carry a given distribution of tokens for the net -- called \emph{marking} --
to some other marking. Here is where the similitude between applications specializes:
for instance, in chemistry, tokens represent molecules/atoms of a given type specified
by the place in which they live. Two tokens in the same place are physically indistinguishable, and
we do not care which token is consumed by a fixed transition. In computer
science applications -- e.g.~\cite{StateboxTeam2019} -- tokens are instead seen as terms of a given type,
and it is important to distinguish between them. These two philosophies are
called \emph{collective-token} and \emph{individual-token} philosophies, respectively~\cite{vanGlabbeek1999}.

In any case, one other important question is to establish when a given Petri net is \emph{bounded},
meaning that starting from a given marking, no place will hold more than a pre-determined number
of tokens throughout any possible firing~\cite{Reisig2013}. Traditionally, there is a simple way of turning any net
into a bounded one: we double-up the places adding what we call \emph{anti-places} (depicted in red), and edit
transitions so that each input (output) from (to) a place is now paired with a corresponding output
(input) to (from) the corresponding anti-place:
\begin{equation*}
  \scalebox{0.75}{
	\begin{tikzpicture}
		\begin{scope}[xshift=-100]
			\begin{pgfonlayer}{nodelayer}
				\node [place,tokens=0] (1a) at (1.5,1.25) {};
				\node [place,tokens=0] (1b) at (-0,0) {};
				\node [place,tokens=0] (3a) at (1.5,0) {};
				\node[transition] (2a) at (0,1.25) {};
				\node[transition] (2b) at (0,-1.25) {};
			\end{pgfonlayer}
			\begin{pgfonlayer}{edgelayer}
				\draw[style=inarrow, thick] (1a) to (2a);
				\draw[style=inarrow, thick] (1b) to (2a);
				\draw[style=inarrow, thick, bend right] (2b) to (1b);
				\draw[style=inarrow, thick, bend left] (2b) to (1b);

				\draw[style=inarrow, thick, bend left] (2a) to (3a);
				\draw[style=inarrow, thick, bend left] (3a) to (2b);
			\end{pgfonlayer}
		\end{scope}

		\begin{scope}[xshift=100]
			\begin{pgfonlayer}{nodelayer}
				\node [place,tokens=0] (1a) at (1.5,1.25) {};
				\node [place,tokens=0] (1b) at (0.5,0) {};
				\node [place,tokens=0] (3a) at (1.5,0) {};
				\node [antiplace,tokens=0] (1ax) at (-1.5,1.25) {};
				\node [antiplace,tokens=0] (1bx) at (-0.5,0) {};
				\node [antiplace,tokens=0] (3ax) at (-1.5,0) {};
				\node[transition] (2a) at (0,1.25) {};
				\node[transition] (2b) at (0,-1.25) {};
			\end{pgfonlayer}
			\begin{pgfonlayer}{edgelayer}
				\draw[style=inarrow, thick] (1a) to (2a);
				\draw[style=inarrow, thick] (1b) to (2a);
				\draw[style=inarrow, thick, bend right] (2b) to (1b);
				\draw[style=inarrow, thick] (2b) to (1b);

				\draw[style=inarrow, thick, bend left] (2a) to (3a);
				\draw[style=inarrow, thick, bend left] (3a) to (2b);

				\draw[style=antiarrow, thick] (1ax) to (2a);
				\draw[style=antiarrow, thick] (1bx) to (2a);
				\draw[style=antiarrow, thick] (2b) to (1bx);
				\draw[style=antiarrow, thick, bend left] (2b) to (1bx);

				\draw[style=antiarrow, thick, bend right] (2a) to (3ax);
				\draw[style=antiarrow, thick, bend right] (3ax) to (2b);
			\end{pgfonlayer}
		\end{scope}

		\draw[style=inarrow, thick] (-1.25,0) -- (1.25,0);
	\end{tikzpicture}
}
\end{equation*}
It is easy to see that for any marking, the amount of tokens in each anti-place will determine
the maximum number of tokens the corresponding place can hold through any execution, and vice-versa.

Building on the long-standing tradition of looking at Petri nets through a categorical lens~\cite{Meseguer1990},
we want to give a category-theoretic interpretation of this process, both in the collective-token philosophy
and in the individual-token one. In each case, we will provide two different categorical ways
to bound a net, which we will call \emph{internal} and \emph{external}, respectively.
Then, we will prove that internal and external bounding are conceptually the same thing,
generalizing a technique already established in~\cite{Genovese2020}. Notably, in doing so, we will employ a new
kind of categorical semantics for Petri nets, which we call \emph{non-local}~\cite{Genovese2021a}, where tokens
come endowed with information concerning the global properties of the net itself.

\section{Petri nets and their executions}
\begin{notation}
  Let $S$ be a set; denote with $\Msets{S}$ the set of multisets over $S$.
  Multiset sum will be denoted with $\oplus$, and difference (only partially defined) with
  $\ominus$. $\Msets{S}$ with $\oplus$ and the empty multiset is isomorphic to the free commutative monoid on $S$.
\end{notation}
\begin{definition}[Petri net]\label{def: Petri net}
  A \emph{Petri net} is a couple functions $T \xrightarrow{s,t} \Msets{S}$ for
  some sets $T$ and $S$, called the set of places and transitions of the net, respectively.
\end{definition}
\begin{definition}[Markings and firings]\label{def: Petri net firing}
  A \emph{marking} for a net  $T \xrightarrow{s,t} \Msets{S}$ is an element of $\Msets{S}$,
  representing a distribution of tokens in the net places. A transition $u$ is \emph{enabled} in a marking $M$ if
  $M \ominus s(u)$ is defined. An enabled transition can \emph{fire}, moving tokens in the net.
  Firing is considered an atomic event, and the marking resulting from firing $u$ in $M$ is $M \ominus s(u) \oplus t(u)$.
\end{definition}
As we said, depending on whether we want to consider tokens as indistinguishable or not, we can interpret
nets under two different philosophies, traditionally called collective-token and individual-token, respectively.
Category theory is helpful in pinpointing precisely the meaning of this distinction by providing
different notions for the \emph{category of executions of a given net}. In the collective-token philosophy,
the executions of a Petri net are taken to be morphisms in a \emph{commutative monoidal category} -- a category
whose monoid of objects is commutative:
\begin{definition}[Category of executions -- collective-token philosophy]\label{def: executions common token philosophy}
  Let $N: T \xrightarrow{s,t} \Msets{S}$ be a Petri net.
  We can generate a \emph{free commutative strict monoidal category (FCSMC)}, $\Comm{N}$, as follows:
  \begin{itemize}
    \item The monoid of objects is $\Msets{S}$.
    \item Morphisms are generated by $T$: each $u \in T$ corresponds to a morphism generator $su \xrightarrow{u} tu$; morphisms are obtained
          by considering all the formal (monoidal) compositions of generators and identities.
  \end{itemize}
  A detailed description of this construction can be found in~\cite{Master2020}.
\end{definition}
\begin{example}
  In~\cref{def: executions common token philosophy} objects represent markings of a net: $A\oplus A \oplus B$ means ``two tokens
  in $A$ and one token in $B$''. Morphisms represent executions of a net, mapping markings to markings.
  A marking is reachable from another one if and only if there is a morphism between them.
  \begin{equation*}
    \resizebox{\textwidth}{9em}{
	\begin{tikzpicture}
		\pgfmathsetmacro\bS{5}
		\pgfmathsetmacro\hkX{(\bS/3.5)}
		\pgfmathsetmacro\kY{-1.5}
		\pgfmathsetmacro\hkY{\kY*0.5}
		\draw pic (m0) at (0,0) {netA={{1}/{1}/{2}/{0}/{}/{}/{}}};
		\draw pic (m1) at (\bS,0) {netA={{0}/{2}/{2}/{0}/{}/{}/{}}};
		\draw pic (m2) at ({2 * \bS},0) {netA={{0}/{1}/{3}/{1}/{}/{}/{}}};
		\draw pic (m3) at ({3 * \bS},0) {netA={{0}/{1}/{2}/{2}/{}/{}/{}}};
		\begin{scope}[very thin]
			\foreach \j in {1,...,3} {
					\pgfmathsetmacro \k { \j * \bS - 1 };
					\draw[gray,dashed] (\k,-4) -- (\k,-8.25);
					\draw[gray] (\k,1) -- (\k,-4);
				}
		\end{scope}
		\begin{scope}[shift={(0,-4)}, oriented WD, bbx = 1cm, bby =.4cm, bb min width=1cm, bb port sep=1.5]
			\draw node [fill=\backgrnd,bb={1}{1}] (Tau) at (\bS -1,-1) {$t$};
			\draw node [fill=\backgrnd,bb={1}{2}, ] (Mu)  at ({2 * \bS - 1},-1) {$v$};
			\draw node [fill=\backgrnd,bb={1}{1}] (Nu)  at ({3 * \bS - 1},{2 * \kY}) {$u$};
			\draw (-1,-1) --     node[above] {$p_1$}       (0,-1)
			--                  node[above] {}          (Tau_in1);
			\draw (-1,-2) -- node[above] {$p_2$} (0,-2) -- (\bS-1, -2);
			\draw (-1,-3) -- node[above] {$p_3$} (0,-3) -- (\bS-1, -3);
			\draw (-1,-4) -- node[above] {$p_3$} (0,-4) -- (\bS-1, -4);
			\draw (Tau_out1) -- node[above] {$p_2$}    (Mu_in1);
			\draw (\bS-1,-2) -- (2*\bS-1, -2);
			\draw (\bS-1,-3) -- (2*\bS-1, -3);
			\draw (\bS-1,-4) -- (2*\bS-1, -4);
			\draw (Mu_out1) --  (3*\bS-1, -0.725);
			\draw (Mu_out2) --  (3*\bS-1, -1.325);
			\draw (2*\bS-1,-2) -- (3*\bS-1, -2);
			\draw (2*\bS-1,-3) -- (Nu_in1);
			\draw (2*\bS-1,-4) -- (3*\bS-1, -4);
			\draw (3*\bS-1,-0.725) to (4*\bS-2, -0.725) -- node[above] {$p_3$} (4*\bS-1, -0.725);
			\draw (3*\bS-1,-1.325) -- (3*\bS,-1.325) to (4*\bS-2, -1.325) -- node[above] {$p_4$} (4*\bS-1, -1.325);
			\draw (3*\bS-1,-2) to (4*\bS-2, -2) -- node[above] {$p_2$} (4*\bS-1, -2);
			\draw (Nu_out1) to (4*\bS-2, -3) -- node[above] {$p_4$} (4*\bS-1, -3);
			\draw (3*\bS-1,-4) to (4*\bS-2, -4) -- node[above] {$p_3$} (4*\bS-1, -4);
		\end{scope}
	\end{tikzpicture}
}
  \end{equation*}
\end{example}
As for the individual-token philosophy, we obtain a suitable semantics
by relaxing the commutativity requirement in~\cref{def: executions common token philosophy}.
\begin{definition}[Category of executions -- individual-token philosophy]\label{def: executions individual token philosophy}
  Let $N: T \xrightarrow{s,t} \Msets{S}$ be a Petri net. We can generate
  a \emph{free symmetric strict monoidal category (FSSMC)}, $\Free{N}$, as follows:
  \begin{itemize}
    \item The monoid of objects is $\Str{S}$, the set of strings over $S$. Monoidal product of objects $A,B$,
          denoted $A \otimes B$, is given by string concatenation.
    \item Morphisms are generated by $T$:  each $u \in T$ corresponds to a morphism generator $su \xrightarrow{u} tu$,
          where $su, tu$ are obtained by choosing some ordering on their underlying multisets; morphisms are obtained by considering all the formal horizontal and vertical compositions of generators, identities and symmetries.
  \end{itemize}
  A detailed description of this construction can be found in~\cite{Genovese2019c}.
\end{definition}
\begin{example}\label{ex: net executions difference in causality}
  The interpretation of~\cref{def: executions individual token philosophy}
  is as in~\cref{def: executions common token philosophy}, but
  now switching tokens around is not anymore a trivial operation. For instance,
  the following morphisms are considered different in~\cref{def: executions individual token philosophy},
  and equal in~\cref{def: executions common token philosophy}:
  \begin{equation*}
    \scalebox{0.85}{
	\begin{tikzpicture}[baseline=0.45cm]
		\node (A) at (-0.5,0.5) {$A$};
		\node (B) at (-0.5,0) {$B$};
		\node (B') at (5.5,0.5) {$B$};
		\node (C) at (5.5,0) {$C$};

		\node[draw,thick,minimum width=0.6cm,minimum height=0.6cm, rounded corners=3pt] (f) at (1.5,0.5) {$f$};
		\node[draw,thick,minimum width=0.6cm,minimum height=0.6cm, rounded corners=3pt] (g) at (3.5,0) {$g$};
		\draw (0,0.5) -- (f);
		\draw (f) -- (5,0.5);
		\draw (0,0) -- (g);
		\draw (g) -- (5,0);
	\end{tikzpicture}
	\qquad
	\begin{tikzpicture}[baseline=0.45cm]
		\node (A) at (-0.5,0.5) {$A$};
		\node (B) at (-0.5,0) {$B$};
		\node (B') at (5.5,0.5) {$B$};
		\node (C) at (5.5,0) {$C$};

		\node[draw,thick,minimum width=0.6cm,minimum height=0.6cm, rounded corners=3pt] (f) at (1.5,0.5) {$f$};
		\node[draw,thick,minimum width=0.6cm,minimum height=0.6cm, rounded corners=3pt] (g) at (3.5,0) {$g$};
		\draw (0,0.5) -- (f);
		\draw (f) -- (2,0.5);
		\draw (0,0) -- (2,0);

		\draw[out=0, in=180] (2,0.5) to (3,0);
		\draw[out=0, in=180] (2,0) to (3,0.5);

		\draw(3,0.5) -- (5,0.5);
		\draw(3,0) -- (g);
		\draw (g) -- (5,0);
	\end{tikzpicture}}
  \end{equation*}
\end{example}
A great deal of work has been devoted to understanding how categories of Petri nets
and categories of (commutative, symmetric) strict monoidal categories are
related~\cite{Meseguer1990, Sassone1995,Genovese2019b, Baez2020a, Baldan2009, ,Bruni2013, Baez2021}.
We will not focus on
these issues in this paper; instead, we will be interested in recovering the idea of ``bounding a net'' as a functor between the category of executions of a net and some other category.

The approach we pursue will be very similar to the one we already adopted for coloured nets in~\cite{Genovese2020},
with the difference that the functors here will have to be lax-monoidal-lax, something we already
partially exploited in~\cite{Genovese2021a}. For obvious reasons,
we start from the collective-token case, which is simpler.
\section{Bound semantics in the collective-token philosophy}\label{sec: bound semantics collective}
We briefly described the process of turning a Petri net into a bounded one in~\cref{sec: introduction}.
Categorically, this process can be implemented in the executions semantics,
as follows.
\begin{definition}[Internal bound semantics -- collective-token philosphy]\label{def: internal bound semantics common token philosophy}
  Let $N$ be a Petri net, and consider $\Comm{N}$, its corresponding FCSMC.
  The \emph{internal bound semantics of $N$ in the collective-token philosophy}
  is given by the FCSMC $\CommB{N}$ generated as follows:
  \begin{itemize}
    \item For each generating object $A$ in $\Comm{N}$, we introduce object generators $\Fwd{A}$, $\Bwd{A}$.
    \item For each generating morphism
          \begin{equation*}
            \textstyle\bigoplus_{i=1}^n A_i \xrightarrow{u} \bigoplus_{j=1}^m B_j
          \end{equation*}
          in $\Comm{N}$, we introduce a morphism generator:
          \begin{equation*}
            \textstyle\bigoplus_{i=1}^{m\vee n} \left(\Fwd{A}_i \oplus \Bwd{B_i} \right)
            \xrightarrow{u}
            \bigoplus_{i=1}^{m\vee n} \left(\Bwd{A}_i \oplus \Fwd{B_i} \right)
          \end{equation*}
          where we adopt the convention that, if $n < m$ (respectively $n > m$), then $A_{n+1}, \dots, A_m$
          (respectively $B_{m_1}, \dots, B_n$) are taken to be equal to $I$, the monoidal unit, and $m\vee n:=\max\{m,n\}$.
  \end{itemize}
\end{definition}
\begin{example}
  Notice that by definition, every FCSMC is presented by a Petri net.
  Following~\cref{def: internal bound semantics common token philosophy},
  it is easy to see that if we consider $N$ to be the net in~\cref{sec: introduction},
  then $\CommB{N}$ is presented exactly by the bounded net we na\"ively associated to it.
  Indeed, generating objects of type $\Fwd{A}$ represent a token in the place $A$ of the net while generating objects of type $\Bwd{A}$ represent tokens in the corresponding anti-place.
  Generating morphisms of $\CommB{N}$ have inputs and outputs exactly as the transitions in the net obtained by bounding $N$.
\end{example}
\begin{proposition}\label{prop: comonad collective-token philosophy}
  The assignment $\Comm{N} \mapsto \CommB{N}$ defines a comonad in
  the category of FCMSCs and strict monoidal functors between them, $\FCSMC$.
\end{proposition}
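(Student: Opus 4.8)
The plan is to produce the comonad data $(G,\varepsilon,\delta)$ by hand and verify the three comonad identities, using throughout that every category in sight is \emph{free}: a strict monoidal functor out of an FCSMC is determined by its values on generating objects and generating morphisms, so all equations reduce to checks on generators. Write $G$ for the endofunctor $\Comm N \mapsto \CommB N$; since every FCSMC is freely presented by a net, $G$ is defined on all objects of $\FCSMC$. On a strict monoidal functor $F\colon \Comm N \to \Comm M$ I would define $GF$ by \emph{forward/backward lifts}: if $F(A)=\bigoplus_k C_k$ then $GF(\Fwd A)=\bigoplus_k \Fwd{C_k}$ and $GF(\Bwd A)=\bigoplus_k \Bwd{C_k}$, while the bounded generator $\Block u$ is sent to the evident bounded morphism determined by $F(u)$. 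Preservation of identities and composition then follows from strict monoidality, which is exactly what guarantees that generators go to sums of generators compatibly.

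Next I would give the counit and comultiplication. The counit $\varepsilon_{\Comm N}\colon \CommB N \to \Comm N$ forgets the anti-places: $\varepsilon(\Fwd A)=A$, $\varepsilon(\Bwd A)=I$, and $\Block u \mapsto u$; matching sources and targets shows this is a well-defined strict monoidal functor. The comultiplication $\delta_{\Comm N}\colon \CommB N \to G^2(\Comm N)$ is the delicate ingredient, and I would set
\[
  \delta(\Fwd A)=\FFwd A \oplus \BBwd A,\qquad \delta(\Bwd A)=\FBwd A \oplus \BFwd A,
\]
sending the bounded generator $\Block u$ to the twice-bounded generator $\Block{\Block u}$. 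The nontrivial point is that this type-checks: for $\delta(\Block u)$ to have source $\delta(\text{source of }\Block u)$ and target $\delta(\text{target of }\Block u)$, the summand $\BBwd A$ in $\delta(\Fwd A)$ is indispensable, and one verifies that with it the sources and targets match on the nose. This is precisely why the naive ``diagonal'' choice $\Fwd A\mapsto\FFwd A$ does not work.

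With the data in place I would check the identities on generators. The counit laws are immediate: both $\varepsilon_{G(\Comm N)}\circ\delta$ and $G(\varepsilon_{\Comm N})\circ\delta$ return $\Fwd A\mapsto\Fwd A$, $\Bwd A\mapsto\Bwd A$, because the ``wrong'' summands ($\BBwd A$ on one leg, $\BFwd A$ on the other) are sent to $I$. Coassociativity $(G\delta)\circ\delta=(\delta G)\circ\delta$ is the substantive computation: encoding a $k$-fold bounded copy of $A$ by a sign string $\vec s\in\{+,-\}^k$, the map $\delta$ reads as ``replace the bounding sign $s$ by the formal sum of all pairs $(a,b)$ with $ab=s$''. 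Both composites then send $\Fwd A$ (respectively $\Bwd A$) to the sum, taken once each, of all length-three generators $A^{[x,y,z]}$ with $xyz=+$ (respectively $xyz=-$), hence agree; they likewise agree on generating morphisms, each sending $\Block u$ to its threefold bounding. Naturality of $\varepsilon$ and $\delta$ in $\Comm N$ is routine, since every assignment was made uniformly on generators and $G$ itself was defined through the forward/backward lifts.

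The main obstacle, and the conceptual core, is getting the comultiplication right: the demand that $\delta$ be an honest strict monoidal functor forces the ``sum over sign-factorizations'' shape above, and it is this same shape that makes both the source/target bookkeeping of $\delta(\Block u)$ and coassociativity fall out. A secondary technical nuisance is the well-definedness of $G$ on morphisms and of the bounded generators themselves: bounding a morphism generator pairs inputs with outputs index-wise with $I$-padding (the $m\vee n$ convention of \cref{def: internal bound semantics common token philosophy}), so one must check independence of the chosen pairing — and this is exactly where commutativity of the object monoid is used, which is why the collective-token case is the easy one.
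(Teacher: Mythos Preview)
Your proposal is correct and follows essentially the same approach as the paper: the same counit ($\Fwd A\mapsto A$, $\Bwd A\mapsto I$) and the same comultiplication ($\Fwd A\mapsto\FFwd A\oplus\BBwd A$, $\Bwd A\mapsto\FBwd A\oplus\BFwd A$), with the comonad laws then checked on generators. Your sign-factorization bookkeeping for coassociativity and your treatment of $F(A)$ as a general sum of generators are slightly more explicit than the paper, which simply declares these verifications straightforward.
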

\begin{proof}
  First of all, we have to prove that the procedure is functorial. For any strict monoidal functor
  $F : \Comm{N} \to \Comm{M}$ we define the action on morphisms $\CommB{F}: \CommB{N} \to \CommB{M}$
  as the following monoidal functor:
  \begin{itemize}
    \item If for a generating object $A$ of $\Comm{N}$ it is $FA = B$, then $\CommB{F}\Fwd{A} = \Fwd{B}$ and $\CommB{F}\Bwd{A} = \Bwd{B}$.
    \item If for a generating morphism $t$ of $\Comm{N}$ it is $Ft = u$, then $\CommB{F}t = u$.
  \end{itemize}
  Identities and compositions are clearly respected, making $\CommB{\firstblank}$ an endofunctor
  in $\FCSMC$.
  As a counit, on each component $N$ we define the strict monoidal functor
  $\epsilon_N: \CommB{N} \to \Comm{N}$ sending:
  \begin{itemize}
    \item The generating object $\Fwd{A}$ to $A$, and the generating object $\Bwd{A}$ to $I$, the monoidal unit of $\Comm{N}$.
    \item A generating morphism
          \begin{equation*}
            \textstyle\bigoplus_{i=1}^{m\vee n} \left(\Fwd{A}_i \oplus \Bwd{B_i} \right)
            \xrightarrow{u}
            \bigoplus_{i=1}^{m\vee n} \left(\Bwd{A}_i \oplus \Fwd{B_i} \right)
          \end{equation*}
          to the generating morphism
          \begin{equation*}
            \textstyle\bigoplus_{i=1}^n A_i \xrightarrow{u} \bigoplus_{j=1}^m B_j.
          \end{equation*}
        \end{itemize}
        The procedure is natural in the choice of $N$, making $\epsilon$ into a natural transformation $\CommB{\firstblank} \to \Id{\FCSMC}$.

        As for the comultiplication, on each component $N$ we define the strict monoidal functor
        $\delta_N: \CommB{N} \to \CommB{\CommB{N}}$ sending:
        \begin{itemize}
          \item the generating object $\Fwd{A}$ to the generating object $\FFwd{A} \oplus \BBwd{A}$;
          \item the generating object $\Bwd{A}$ to the generating object $\BFwd{A} \oplus \FBwd{A}$;
          \item a generating morphism of $\CommB{N}$
                \begin{equation*}
                  \textstyle\bigoplus_{i=1}^{m\vee n} \left(\Fwd{A}_i \oplus \Bwd{B_i} \right)
                  \xrightarrow{u}
                  \bigoplus_{i=1}^{m\vee n} \left(\Bwd{A}_i \oplus \Fwd{B_i} \right)
                \end{equation*}
                to the generating morphism $\CommB{\CommB{N}}$
                \begin{align*}
                  \textstyle\bigoplus_{i=1}^{m\vee n} \left(\FFwd{A}_i \oplus \BBwd{A}_i \oplus \FBwd{B}_i \oplus \BFwd{B}_i \right)
                  \xrightarrow{u}
                  \bigoplus_{i=1}^{m\vee n} \left(\BFwd{A}_i \oplus \FBwd{A}_i \oplus \BBwd{B}_i \oplus \FFwd{B}_i \right)
                \end{align*}
                The naturality of $\delta$, its coassociativity and the fact that $\epsilon$ is a counit are all straightforward to check. \qedhere
        \end{itemize}
\end{proof}
In the spirit of~\cite{Genovese2020}, our goal is now to frame this comonad into a bigger context, one of Petri nets with a \emph{semantics} attached to them. A
semantics for a Petri net is just a functor from its category of executions
(modulo choice of token philosophy) to some other monoidal category $\Semantics$.

In~\cite{Genovese2020}, this functor was required to be strict monoidal. This backed up the interpretation
that a semantics ``attaches extra information to tokens'', which is then used by the transitions
in a suitable way.

Here, we require this functor to be \emph{lax-monoidal-lax}. Laxity amounts to saying
that we can attach \emph{non-local} information to tokens: they may ``know'' something
about the overall state of the net, and the laxator represents the process of ``tokens joining knowledge''.
If this sounds handwavy, we guarantee it will be made clear shortly.
\begin{definition}[Non-local semantics -- collective-token philosophy]\label{def: non-local semantics collective-token philosphy}
  Let $N$ be a Petri net and let $\Semantics$ be a monoidal bicategory~\cite{Schommer-Pries2014}.
  A \emph{Petri net with a commutative non-local semantics} is a couple $\NetSem{N}$ with
  $\Fun{N}$ a \emph{lax-monoidal-lax functor} $\Comm{N} \to \Semantics$.

  A morphism $\NetSem{N} \to \NetSem{M}$ of Petri nets
  with commutative semantics is a strict monoidal functor
  $\Comm{N} \xrightarrow{F} \Comm{M}$ making the obvious triangle commute.
  We denote the category of Petri nets with a non-local commutative semantics with $\PetriS{\Semantics}$.
\end{definition}
We now show how we can encode the information
of a net being bounded as some particular kind of non-local semantics.
\begin{notation}
  Let $\Span$ be the monoidal bicategory of sets, spans and span morphisms.
  A morphism $A \to B$ in $\Span$ consists of a set $S$ and a pair of functions
  $A \leftarrow S \rightarrow B$. When we need to notationally extract this information from
  $f$, we write $A \xleftarrow{f_1} S_f \xrightarrow{f_2} B$.
  We sometimes consider a span as a morphism $f: S_f \to A \times B$, thus we may
  write $f(s)  = (a,b)$ for $s \in S_f$ with $f_1(s) = a$ and $f_2(s) = b$.
  Recall moreover that a 2-cell in $\Span$ $f \Rightarrow g$ is a function $\theta:S_f \to S_g$
  such that $f = \theta \Cp g$.
\end{notation}
\begin{remark}
  In a free commutative (resp.\@\xspace symmetric) strict monoidal category, there are no equations between generators. As such, even if a morphism can be
  decomposed in a composition of tensors of generators (and symmetries) in multiple ways,
  the generators used -- as well as how many times they are used -- is an invariant of this decomposition.
  Given this, there exists a well-defined function
  \begin{equation*}
    \begin{tikzcd}
    \chi : \Homtotal{\Free{N}} \ar[r] &
    \left\{
      \begin{smallmatrix}
      \text{generator mor-}\\
      \text{phisms of } \Msets{{\Free{N}}}
  \end{smallmatrix}
      \right\}
\end{tikzcd}
  \end{equation*}
  mapping each morphism $f$ to a multiset counting how many times each morphism generator is used in the decomposition of $f$.
\end{remark}
\begin{definition}[External bound semantics -- collective-token philosphy]\label{def: external bound semantics common token philosophy}
  Given a Petri net $N: T \xrightarrow{s,t} \Msets{S}$, define
  the following functor $\Fun{N}: \Comm{N} \to \Span$, which is lax-monoidal-lax (see \autoref{is_lax_m_lax}):
  \begin{itemize}
    \item Each object $A$ of $\Comm{N}$ is mapped to the set $\Msets{S}$, the set of objects of $\Comm{N}$.
    \item Each morphism $A \xrightarrow{f} B$ is sent to the span:
    \begin{equation*}
     \Msets{S} \xleftarrow{t} \chi(f)^{-1} \xrightarrow{s} \Msets{S}
    \end{equation*}
    with $s,t$ denoting source and target, respectively, of a morphism in $\Comm{N}$.
  \end{itemize}
\end{definition}
\begin{proposition}\label{is_lax_m_lax}
  The functor of~\cref{def: external bound semantics common token philosophy} is lax-monoidal-lax.
  The class of $\Fun{N}$ for all Petri nets $N$ as in~\cref{def: external bound semantics common token philosophy} form a subcategory of
  $\PetriSpan$, which we call $\PetriCommBound$.
\end{proposition}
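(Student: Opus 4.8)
The plan is to exhibit explicitly all the coherence data witnessing that $\Fun{N}\colon \Comm{N}\to\Span$ is lax-monoidal-lax, and then to identify $\PetriCommBound$ as the full subcategory of $\PetriSpan$ spanned by the objects $\NetSemSpan{N}$. Throughout I rely on the preceding Remark: $\chi$ counts generators and is a well-defined invariant, and since generators are counted additively under both composition and tensor we have $\chi(f\Cp g)=\chi(f)\oplus\chi(g)$ and $\chi(f\otimes g)=\chi(f)\oplus\chi(g)$, so that $\chi(f)^{-1}$, the set of morphisms of $\Comm{N}$ using exactly the same generators as $f$, is well defined. I write $\Source{g},\Target{g}$ for source and target of a morphism $g$ of $\Comm{N}$.

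First I construct the lax-functorial comparators. For composable $f\colon A\to B$ and $g\colon B\to C$, the composite span $\Fun{N}(f)\Cp\Fun{N}(g)$ has as apex the pullback $\chi(f)^{-1}\times_{\Msets{S}}\chi(g)^{-1}=\{(h_1,h_2)\mid \Source{h_1}=\Target{h_2}\}$, the gluing being along the source leg of $\Fun{N}(f)$ and the target leg of $\Fun{N}(g)$. The condition $\Source{h_1}=\Target{h_2}$ makes $h_2\Cp h_1$ composable, and since $\chi(h_2\Cp h_1)=\chi(h_1)\oplus\chi(h_2)=\chi(f\Cp g)$ the assignment $(h_1,h_2)\mapsto h_2\Cp h_1$ lands in $\chi(f\Cp g)^{-1}$, the apex of $\Fun{N}(f\Cp g)$. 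It commutes with the legs because $\Target{h_2\Cp h_1}=\Target{h_1}$ and $\Source{h_2\Cp h_1}=\Source{h_2}$, hence defines a $2$-cell $\Fun{N}(f)\Cp\Fun{N}(g)\Rightarrow\Fun{N}(f\Cp g)$; it is not in general surjective, so the functor is genuinely lax. For the unit, in the commutative setting the only generator-free morphisms are identities, so $\chi(\Id{A})^{-1}\cong\Msets{S}$ via $\Id{M}\leftrightarrow M$ and $\Fun{N}(\Id{A})$ is the identity span, the comparator $\Id{\Fun{N}(A)}\Rightarrow\Fun{N}(\Id{A})$ being the induced isomorphism. Associativity and unitality of these comparators reduce to those of composition in $\Comm{N}$.

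Next I give the lax-monoidal structure. Since $\Fun{N}$ sends every object to $\Msets{S}$, the laxator $\mu_{A,B}\colon \Fun{N}(A)\otimes\Fun{N}(B)\to\Fun{N}(A\otimes B)$ is a span $\Msets{S}\times\Msets{S}\to\Msets{S}$, which I take to be multiset sum $\oplus$ viewed as a span; the unit comparator $\TensorUnit\to\Fun{N}(\TensorUnit)$ picks out the empty multiset. Lax naturality of $\mu$ in $(A,B)$ is witnessed by $(h_1,h_2)\mapsto\bigl((\Target{h_1},\Target{h_2}),\,h_1\otimes h_2\bigr)$, which commutes with both legs by additivity of $\Source{-}$, $\Target{-}$ and $\chi$ under $\otimes$; the associativity and unit coherences of $\mu$ collapse to associativity and unitality of $(\Msets{S},\oplus,\emptyset)$. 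I expect the main obstacle to be the bookkeeping for the remaining coherence diagrams of a lax-monoidal-lax functor into a monoidal bicategory, namely the compatibility cells relating the composition comparators of the previous paragraph with the laxators $\mu$ and $\eta$; each such diagram, however, is a comparison of two functions between explicit pullbacks of sets, both sides being computed by juxtaposing and then composing representatives, so commutativity follows uniformly from commutativity of $\oplus$, the interchange law, and additivity of $\chi$.

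Finally, for the subcategory claim: by the first three paragraphs each $\NetSemSpan{N}$ is a legitimate object of $\PetriSpan$ in the sense of \cref{def: non-local semantics collective-token philosphy}. I then define $\PetriCommBound$ to be the full subcategory of $\PetriSpan$ spanned by the objects $\NetSemSpan{N}$ as $N$ ranges over all Petri nets. Being full, it automatically contains the identities of its objects and is closed under the composition inherited from $\PetriSpan$, hence is a subcategory; no verification beyond well-definedness of the objects, already established, is needed.
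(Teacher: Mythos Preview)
Your proof is correct and follows essentially the same approach as the paper: the composition comparator is the map $(h_1,h_2)\mapsto h_2\Cp h_1$ into $\chi(f\Cp g)^{-1}$, identities are preserved because generator-free morphisms in $\Comm{N}$ are exactly the identities, the monoidal laxator is multiset sum, and the remaining coherences are dismissed as routine algebra. The paper's own proof is simply a terser version of what you wrote, relying on the observation $\chi(f\Cp g)=\chi(f\oplus g)$ to collapse the monoidal case into the compositional one; your version spells out the object-level laxator and the lax-naturality $2$-cell explicitly, which the paper leaves implicit (though it is confirmed by the Example immediately following the Proposition).
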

\begin{proof}
  Since $\chi(\Id{A})$ is the empty multiset, and identities and symmetries coincide,
  $\chi(\Id{A})^{-1}$ coincides with the set of objects of $\Comm{N}$, and identities are
  preserved strictly.
  As for composition, the laxating 2-cell is given by the obvious inclusion
  $\chi(f)^{-1} \times_{\Msets{S}} \chi(g)^{-1} \to \chi(f \Cp g)^{-1}$
  obtained by composition. A very similar argument
  holds for the monoidal product noticing that $\chi(f \Cp g) = \chi(f \oplus g)$.
  The coherence conditions are tedious to check but straightforward, given that the structure of the laxators is very simple.\footnote{
    If this seems strange, recall Example~\ref{ex: net executions difference in causality}.
    As objects commute, morphisms in $\Comm{C}$ obey a very weak causal flow,
    and composition and monoidal product are quite similar.
  }
\end{proof}
\begin{example}
  We now try to shed some light on~\cref{def: external bound semantics common token philosophy}.
  Consider the net below. It has three places, which according to \cref{def: external bound semantics common token philosophy} are all sent to the set of multisets over themselves. We interpret tokens as endowed with elements of such a set.
  These represent markings on the anti-places of the net. In our example below, the token in $p_1$
  ``knows'' that there is one token in the anti-place $\Bwd{p}_1$ and four tokens in the anti-place
  $\Bwd{p}_3$, while $p_2$ ``knows'' that there are three tokens in $\Bwd{p}_1$ and $\Bwd{p}_2$, and two in $\Bwd{p}_3$.
  The laxator allows us to consider these two tokens as a unique entity, with their respective ``local pieces of knowledge'' summing up.
  \begin{equation*}
    \scalebox{0.7}{  
\begin{tikzpicture}
    \begin{scope}[xshift=0]
    \begin{pgfonlayer}{nodelayer}
      \node [place,tokens=1, label=above:$p_1$] (1a) at (-1.5,1) {};      			
      \node [place,tokens=1, label=below:$p_2$] (1b) at (-1.5,0) {};
      \node [place,tokens=0, label=right:$p_3$] (3a) at (1.5,0) {};

      \node[transition, label=above:$u_1$] (2a) at (0,0) {};
      \node[transition, label=right:$u_2$] (4a) at (1.5,1) {};

      \draw[very thick, Red] (-1.5,0) circle (0.2);
        \draw[very thick, Red] (-1.7,0) -- (-2,0);
      \draw[very thick, Red] (-1.5,1) circle (0.2);
      \draw[very thick, Red] (-1.7,1) -- (-2,1);

      \draw[very thick, rounded corners, Red] (-4,1.8) rectangle (-2,0.6);
      \draw[very thick, rounded corners, Red] (-4,0.4) rectangle (-2,-0.8);

      \node (lab1) at (-3,1.6) {$p_1: 1$};      			
      \node (lab1) at (-3,1.2) {$p_2: 0$};      			
      \node (lab2) at (-3,0.8) {$p_3: 4$};

      \node (lab3) at (-3,0.2) {$p_1: 3$};
      \node (lab4) at (-3,-0.2) {$p_2: 3$};  
      \node (lab4) at (-3,-0.6) {$p_3: 2$};

    \end{pgfonlayer}
    \begin{pgfonlayer}{edgelayer}
      \draw[style=inarrow, thick] (1a) to (4a);
      \draw[style=inarrow, thick] (1b) to (2a);
      \draw[style=inarrow, thick] (2a) to (3a);
      \draw[style=inarrow, thick] (3a) to (4a);
      \end{pgfonlayer}
  \end{scope}

  \begin{scope}[xshift=260]
    \begin{pgfonlayer}{nodelayer}
      \node [place,tokens=1, label=above:$p_1$] (1a) at (-1.5,1) {};      			
      \node [place,tokens=1, label=below:$p_2$] (1b) at (-1.5,0) {};
      \node [place,tokens=0, label=right:$p_3$] (3a) at (1.5,0) {};

      \node[transition, label=above:$u_1$] (2a) at (0,0) {};
      \node[transition, label=right:$u_2$] (4a) at (1.5,1) {};

      \draw[very thick, Red] (-1.5,0) circle (0.2);
        \draw[very thick, Red] (-1.7,0) -- (-2,0);
      \draw[very thick, Red] (-1.5,1) circle (0.2);
      \draw[very thick, Red] (-1.7,1) -- (-2,1);

      \draw[very thick, rounded corners, Red] (-4,1.4) rectangle (-2,-0.4);

      \node (lab1) at (-3,1) {$p_1: 4$};      			
      \node (lab1) at (-3,0.5) {$p_2: 3$};
      \node (lab1) at (-3,0) {$p_3: 6$};

    \end{pgfonlayer}
    \begin{pgfonlayer}{edgelayer}
        \draw[style=inarrow, thick] (1a) to (4a);
        \draw[style=inarrow, thick] (1b) to (2a);
        \draw[style=inarrow, thick] (2a) to (3a);
        \draw[style=inarrow, thick] (3a) to (4a);
      \end{pgfonlayer}
  \end{scope}

  \draw[inarrow, thick] (2.5,0.5) --node[above, midway] {Laxator} (4.5,0.5);
\end{tikzpicture}
}
  \end{equation*}
  Transitions $u_1, u_2$ generate the morphisms of the category of executions of the net.
  Looking at our definition, $u_1$ is mapped to the span function that subtracts $p_3$
  from the multiset in input and adds $p_2$ to it. This represents the flow of anti-tokens,
  which goes in the opposite way with respect to the flow of tokens of $u_1$. This again backs up our intuition since anti-places are wired to transitions in the opposite way of their corresponding places.
\end{example}
\subsection{Internalization}
We have defined two different kinds of bound semantics for Petri nets
in the collective-token philosophy, specified by~\cref{def: internal bound semantics common token philosophy}
and~\cref{def: external bound semantics common token philosophy}.
The former is labelled \emph{internal} because places and anti-places are all
put together in the same category, while the latter is labelled \emph{external}
since the information about bounded tokens is encoded in a functor. We now
show that the two approaches are one and the same.
\begin{theorem}\label{thm: equivalence bound semantics collective-token philosophy}
  Let $\NetSem{N}$ be an object of $\PetriCommBound$.
  The category $\CommB{N}$ of~\cref{def: internal bound semantics common token philosophy}
  is isomorphic to the category $\GrothendieckS{N}$ defined as follows:
  \begin{itemize}
    \item Objects of $\GrothendieckS{N}$ are couples $(X,x)$ where
          $X$ is an object of $\CategoryC$ and $x \in \Fun{N}X$.

    \item Morphisms $(X,x) \to (Y,y)$ of $\GrothendieckS{N}$ are couples $(f,s)$
          with $f: X \to Y$ a morphism of $\CategoryC$ and $s \in S_{\Fun{N}f}$
          such that $\Fun{N}f(s) = (x,y)$.
  \end{itemize}
\end{theorem}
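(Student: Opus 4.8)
The plan is to exhibit an explicit isomorphism of categories $\Phi\colon\CommB{N}\to\GrothendieckS{N}$ and verify it is bijective on objects and on every hom-set. I would build $\Phi$ from the universal property of $\CommB{N}$: since $\CommB{N}$ is a free commutative strict monoidal category, a strict monoidal functor out of it is uniquely determined by its action on the generating objects and generating morphisms, provided the target is strict commutative monoidal and the assigned images have matching sources and targets. So first I would equip $\GrothendieckS{N}$ with the strict commutative monoidal structure it inherits from the Grothendieck construction of the lax-monoidal-lax functor $\Fun{N}$, namely $(X,x)\otimes(Y,y)=(X\oplus Y,\,x\oplus y)$, where the second component is computed through the laxator of $\Fun{N}$, which by \cref{is_lax_m_lax} (cf.\ the ``tokens joining knowledge'' example) is multiset sum on $\Msets{S}$.

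On generating objects I would set $\Phi(\Fwd{A})=(A,\emptyset)$ and $\Phi(\Bwd{A})=(I,\{A\})$, so that a generic object $\bigoplus_i\Fwd{A_i}\oplus\bigoplus_k\Bwd{C_k}$ is sent to the pair $\left(\bigoplus_i A_i,\ \bigoplus_k C_k\right)$ recording the place-marking and the anti-place-marking. Both sides being identified with $\Msets{S}\times\Msets{S}$, this is manifestly a bijection on objects. On a generating morphism $u$ of $\CommB{N}$, coming from a generator $\bigoplus_{i=1}^{n}A_i\xrightarrow{u}\bigoplus_{j=1}^{m}B_j$ of $\Comm{N}$, I would set $\Phi(u)=(u,u)$, where the first component is the original generator $s(u)\to t(u)$ and the second records $u$ as the canonical element of the apex $\chi(u)^{-1}=S_{\Fun{N}u}$. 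A direct check shows $\Phi(u)$ has source $(s(u),t(u))$ and target $(t(u),s(u))$ in $\GrothendieckS{N}$, so $\Phi$ extends uniquely to a strict monoidal functor.

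To see $\Phi$ is an isomorphism it remains to prove it is a bijection on each hom-set, and the two projections of $\CommB{N}$ onto $\Comm{N}$ organise this. The \emph{place} projection is exactly the counit $\epsilon_N\colon\CommB{N}\to\Comm{N}$ of \cref{prop: comonad collective-token philosophy} ($\Fwd{A}\mapsto A$, $\Bwd{A}\mapsto I$, $u\mapsto u$), while the \emph{anti-place} projection $\CommB{N}\to\Op{\Comm{N}}$ ($\Fwd{A}\mapsto I$, $\Bwd{A}\mapsto A$, $u\mapsto u$) records the backward flow of anti-tokens. Given a morphism $h$ of $\CommB{N}$, its image is the pair consisting of the place-projection $f$ and the anti-place-projection read as an element $s\in S_{\Fun{N}f}$; the remark introducing $\chi$ guarantees that $f$ and $s$ use the same generator-multiset, so $s\in\chi(f)^{-1}$ with $\Fun{N}f(s)=(x,y)$, and that the pair both determines and is determined by $h$.

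The main obstacle I expect is precisely this hom-set bijection, whose content is a rigidity statement for the free commutative strict monoidal category $\CommB{N}$: a morphism must be shown to be completely recoverable from the pair (forward place-execution $f$, compatible anti-token witness $s\in S_{\Fun{N}f}$), with no residual data lost and no admissible pair left unrealised. I would isolate it as a lemma, proving injectivity by arguing that $\epsilon_N$ together with the anti-projection jointly reflect equality of morphisms — using that in a free commutative strict monoidal category there are no equations between generators beyond those forced by the monoidal and interchange laws, so a morphism is pinned down by its interleaved forward/backward generator data — and surjectivity by reassembling any admissible pair $(f,s)$ into a composite of $\CommB{N}$-generators whose two projections are $f$ and $s$. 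Everything else — functoriality, preservation of identities and composition, and compatibility with the Grothendieck composition $(f,s)\Cp(g,r)$ — should be routine once this correspondence is established.
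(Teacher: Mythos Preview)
Your approach is correct but genuinely different from the paper's. The paper does not build the isomorphism by hand: it invokes the Pavlovi\'c--B\'enabou equivalence
\[
\Gamma : \Cat/\Comm{N} \;\simeq\; \Cat_l[\Comm{N},\Span] : \textstyle\int
\]
(\cref{def: total category,def: gamma functor} and \cref{thm: equivalence slice category and functors to span}), observes that applying $\Gamma$ to the counit $\epsilon_N:\CommB{N}\to\Comm{N}$ yields exactly the external semantics $\Fun{N}$, and concludes that $\Grothendieck{\Fun{N}}=\Grothendieck{\Gamma\epsilon_N}\simeq\CommB{N}$. The whole proof is three lines.

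What you do instead is unwind this equivalence explicitly on the free presentation: your place/anti-place projections are precisely the fibrewise data of $\Gamma\epsilon_N$, and your ``rigidity lemma'' is the concrete verification that $\Gamma\epsilon_N=\Fun{N}$ on hom-sets. Your route has the virtue of being self-contained and of making the object- and morphism-level correspondence completely transparent (and it exposes the combinatorial reason the construction works: in an FCSMC a morphism is determined by its source, target, and generator multiset). The paper's route is shorter and structurally cleaner, and---importantly for the sequel---it is the only one that scales to the individual-token case in \cref{sec: bound semantics individual}, where symmetries make your hands-on hom-set argument intractable and the abstract pullback description of $\Grothendieck{}$ becomes essential (cf.\ \cref{prop: comonad individual-token philosophy} and \cref{thm: equivalence bound semantics individual-token philosophy}).
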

\begin{proof}
  Leveraging on the equivalence of categories:
  \begin{equation*}
   \Gamma : \Cat/\Comm{N} \simeq \Cat_l[\Comm{N},\Span]: \Grothendieck{}
  \end{equation*}
  (more details in~\cref{def: total category,def: gamma functor}
  and~\cref{thm: equivalence slice category and functors to span}),
  it is sufficient to consider the counit $\epsilon_N: \CommB{N} \to \Comm{N}$
  of~\cref{prop: comonad collective-token philosophy}, and to notice that $\Gamma{\epsilon_N}$ = $\Fun{N}$.
  Then~\cref{thm: equivalence slice category and functors to span}
  guarantees that $\Grothendieck{\Gamma \epsilon_N} \simeq \epsilon_N$,
  from which the thesis follows.
\end{proof}

\section{Bound semantics in the individual-token philosophy}\label{sec: bound semantics individual}
We now want to generalize the results of~\cref{sec: bound semantics collective}
to the individual-token philosophy. In doing so, we will also prove the worthiness of the categorical approach:
defining external semantics in the individual-token case is considerably harder, but instead of having to do so explicitly,
we will be relying on abstract results from higher category theory.
\begin{definition}[Internal bound semantics -- individual-token philosphy]\label{def: internal bound semantics individual token philosophy}
  Let $N$ be a Petri net, and consider $\Free{N}$, its corresponding FSSMC.
  The \emph{internal bound semantics of $N$ in the individual-token philosophy}
  is given by the FSSMC $\FreeB{N}$ generated by the same information
  of~\cref{def: internal bound semantics common token philosophy}.
\end{definition}
Even if~\cref{def: internal bound semantics common token philosophy}
and~\ref{def: internal bound semantics individual token philosophy} look
nearly identical, things are instantly complicated
by the presence of symmetries. Indeed, proving comonadicity without
resorting to abstract methods becomes very hard, as all
naturality squares require carefully ---and consistently--- selecting symmetries
to commute.
\begin{proposition}\label{prop: comonad individual-token philosophy}
  The assignment $\FreeB{\firstblank}$: $\Free{N} \mapsto \FreeB{N}$ defines a comonad in
  the category of FCMSCs and strict monoidal functors between them, $\FSSMC$.
\end{proposition}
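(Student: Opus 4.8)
The plan is to mirror the construction in the proof of \cref{prop: comonad collective-token philosophy} at the level of generators, and then to discharge every comonad axiom by appealing to the universal property of the free symmetric strict monoidal category on a net. First I would define the endofunctor $\FreeB{\firstblank}$ exactly as before: for a strict monoidal functor $F:\Free{N}\to\Free{M}$ with $FA=B$ on a generating object, set $\FreeB{F}\Fwd{A}=\Fwd{B}$ and $\FreeB{F}\Bwd{A}=\Bwd{B}$, and send a generating morphism $t$ to $Ft$. Since $\FreeB{N}$ is freely generated, specifying a strict symmetric monoidal functor out of it amounts to a source/target-respecting assignment on object and morphism generators; this makes $\FreeB{F}$ well defined, and its preservation of identities and composition is immediate on generators. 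The counit $\epsilon_N:\FreeB{N}\to\Free{N}$ (with $\Fwd{A}\mapsto A$, $\Bwd{A}\mapsto I$) and the comultiplication $\delta_N:\FreeB{N}\to\FreeB{\FreeB{N}}$ (with $\Fwd{A}\mapsto\FFwd{A}\oplus\BBwd{A}$ and $\Bwd{A}\mapsto\BFwd{A}\oplus\FBwd{A}$, and morphism generators assigned verbatim as in the collective case) are defined identically.

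Next I would reduce the comonad laws to generators. Naturality of $\epsilon$ and of $\delta$, the two counit triangles, and coassociativity are all equalities of strict symmetric monoidal functors whose domains are free; by the universal property two such functors coincide as soon as they agree on object and morphism generators. Thus the whole proposition is equivalent to a finite family of generator-level identities, each identical in its forward/backward bookkeeping to the one already verified for $\FCSMC$ in \cref{prop: comonad collective-token philosophy}.

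The hard part will be the symmetries. In the commutative setting the two legs of each naturality square are literally equal because the monoid of objects is commutative; in the symmetric setting they agree only after reordering tensor factors, i.e.\ up to a symmetry isomorphism, and the real content of the proposition is that these reorderings can be chosen consistently so that the squares commute on the nose rather than merely up to coherent isomorphism. To avoid an unmanageable tracking of chosen permutations, my plan is to invoke coherence for symmetric strict monoidal categories, in the form that between any two orderings of the same list of generating objects there is a \emph{unique} symmetry; hence for each generator the reordering symmetry is canonical, and the choices implicitly made in defining $\FreeB{F}$, $\epsilon$ and $\delta$ are automatically mutually compatible. With this canonicity in hand, each symmetric generator-level identity collapses onto the corresponding commutative identity already established, so the symmetric statement is \emph{deduced} from the commutative one together with coherence rather than re-proved by hand. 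I would therefore organize the whole argument around this reduction, which is precisely the reliance on abstract methods that the individual-token case demands.
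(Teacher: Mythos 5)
Your reduction hinges on the claim that ``between any two orderings of the same list of generating objects there is a \emph{unique} symmetry,'' and that is where the argument breaks. Coherence for symmetric monoidal categories gives uniqueness only when the entries of the list are pairwise distinct: the automorphisms of $A\otimes A$ in $\Free{N}$ already contain both the identity and the braiding, and more generally the set of symmetries between two reorderings of a list with repetitions is a torsor under a nontrivial permutation group. Since a Petri net transition routinely consumes or produces several tokens in the same place (the very first net in~\cref{sec: introduction} has a transition with two output arcs into one place), the generic generating morphism of $\FreeB{N}$ has a domain such as $\Fwd{A}\otimes\Bwd{B}\otimes\Fwd{A}\otimes\Bwd{B}\otimes\cdots$ with repeated factors, and there is no canonical symmetry relating the various interleavings implicit in defining $\FreeB{F}$, $\epsilon_N$ and $\delta_N$. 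Your ``canonicity'' step therefore does not discharge the naturality squares and the comonad laws; it silently reintroduces exactly the problem the statement is about, namely the need to select symmetries \emph{consistently} across all generators and all functors at once. (A smaller but related issue: already the domain and codomain of a generating morphism of $\FreeB{N}$ require a choice of linear order on a multiset, so even the object-level assignments are not choice-free.)

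This is why the paper does not argue at the level of generators at all. It first exhibits $\FreeB{N}$ as the pullback of $U:\Span_\bullet\to\Span$ along the lax functor $\Fun{N}$ (as in~\cref{thm: equivalence bound semantics individual-token philosophy}), takes the counit to be the pullback projection $\epsilon_N$, obtains the comultiplication $\alpha_N$ from the universal property of the iterated pullback, and then verifies coassociativity and the counit laws by using that the two pullback projections are jointly monic. All symmetry choices are thereby absorbed into the universal property, which is precisely the ``abstract machinery'' the individual-token case requires. If you want to salvage a hands-on proof, you would need to fix, once and for all, a linear order on the pre- and post-sets of every transition and prove that every construction in sight is equivariant for these choices --- which is feasible but is a substantial piece of bookkeeping, not a corollary of coherence.
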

\begin{proof}
  As we will prove in~\cref{thm: equivalence bound semantics individual-token philosophy},
  the category $\FreeB{N}$ fits in a pullback
  \begin{equation*}
    \adjustbox{scale=1}{
\begin{tikzcd}
  \FreeB{N}\pb \ar[r, "Q"]\ar[d, "\epsilon_N"'] & \Span_\bullet\ar[d, "U"] \\
  \Free{N} \ar[r, "{\Fun{N}}"']& \Span
\end{tikzcd}}
    \label{eq: total category pullback}
  \end{equation*}
  We will use the universal property of such pullback to find a comultiplication
  and a counit map and to show the comonad laws; an evident candidate to be a
  counit is the arrow $\epsilon_N : \FreeB{N} \to \Free{N}$; a candidate
  comultiplication map $\alpha_N : \FreeB{N} \Rightarrow \FreeB[2]{N}$
  is obtained from the diagram below as the unique red arrow filling the diagram
  \begin{equation*}
    \adjustbox{scale=1,center}{
\begin{tikzcd}
  \FreeB{N} \ar[dr, red, "\alpha"]\ar[drr, bend left, "Q"]\ar[ddr, bend right, equal]& \\
  & \FreeB[2]{N} \pb \ar[r,"Q_1"]\ar[d,"\epsilon_1"'] & \Span_\bullet \ar[d,"U"]\\
  & \FreeB{N} \ar[r,"{\Fun{N}_1}"'] & \Span
\end{tikzcd}}

  \end{equation*}
  Here, $N_1^\sharp$ is the common
  value of the diagonal $\epsilon_N \Cp {\Fun{N}} = Q \Cp U$ as in~\cref{eq: total category pullback}, and the pair
  $(Q_1,\epsilon_1)$ is obtained iterating the pullback of~\cref{eq: total category pullback}.

  Now that we have the candidate maps (obviously natural in their component $N$,
  because they are defined via a universal property that is functorial in $\Free{N}$),
  we recall the comonad laws in the component $N$:
  \begin{equation*}
    \adjustbox{scale=1}{
\begin{tikzcd}
  \FreeB{N}\ar[r, "\alpha_N"]\ar[d,"\alpha_N"']& \FreeB[2]{N}\ar[d,"\alpha_{\FreeB{N}}"]\\
  \FreeB[2]{N} \ar[r, "\FreeB{\alpha_N}"']& \FreeB[3]{N}
\end{tikzcd}}
    \qquad\qquad
    \adjustbox{scale=1}{
\begin{tikzcd}
  \FreeB[2]{N}\ar[r, "\FreeB{\epsilon_N}"]\ar[d, "\epsilon_{\FreeB{N}}"']\ar[dr,equal] & \FreeB{N} \ar[d, "\alpha_N"]\\
  \FreeB{N}\ar[r, "\alpha_N"'] & \FreeB[2]{N}
\end{tikzcd}}

  \end{equation*}
  The laws are shown as follows:
  \begin{itemize}
    \item The coassociativity requires us to compute the arrows $\FreeB{\alpha_N}$ and $\alpha_{\FreeB{N}}$:
    First, we build the definition of $\FreeB[3]{N}$, as the upper left corner in the diagram
    \begin{equation*}
      \adjustbox{scale=1,center}{
  \begin{tikzcd}
    {\FreeB[3]{N}}
    \ar[r, "Q_2"]
    \ar[d, "\epsilon_2", shift left=.5em]& {\Span_\bullet}
    \ar[d, equal] \\
    {\FreeB[2]{N}} \ar[u, shift left=.5em, "\alpha_{\FreeB{N}}"]
    \ar[r, "Q_1"]
    \ar[d, "\epsilon_1", shift left=.5em]& {\Span_\bullet}
    \ar[d, equal]\\
    {\FreeB{N}} \ar[u, shift left=.5em, "\alpha_N"]
    \ar[r, "Q"']
    \ar[d,"\epsilon_N"']& {\Span_\bullet}
    \ar[d,"U"] \\
    {\Free{N}}
    \ar[r, "{\Fun{N}}"'] & {\Span}\\
  \end{tikzcd}
}

    \end{equation*}

    \vspace{-1.5em}
    Now, according to the definitions, $\alpha_{\FreeB{N}}$ is the arrow working as a right inverse of $\epsilon_2$, whereas $\FreeB{\alpha_N}$ results from the diagram
    \begin{equation*}
      \adjustbox{scale=1,center}{
  \begin{tikzcd}
    \FreeB{N} \arrow[rddd, equal, bend right] \ar[rd, "\alpha_N" description] \ar[rr] && \FreeB[3]{N} \ar[rr, "Q_2"] \ar[dd, "\epsilon_2"', pos=.25] && \Span_\bullet \ar[dd, "U"] \\
    & \FreeB[2]{N} \ar[rr, "Q_1"', pos=.25] \ar[dd, "\epsilon_1"'] \ar[ur,red, "\color{red}\FreeB{\alpha_N}"] && \Span_\bullet \ar[dd, "U", pos=.25] \ar[ru, equal]\\
    && \FreeB[2]{N} \ar[rr, "{\Fun{N}}_2"', pos=.25]&& \Span\\
    & \FreeB{N} \ar[rr, "{\Fun{N}}_1"'] \ar[ru, "\alpha_N"']&& \Span \ar[ru, equal]
  \end{tikzcd}
}

    \end{equation*}
    The coassociativity property is implied from the fact that
    \begin{equation}
      \begin{cases}
        \alpha_N \Cp \alpha_{\FreeB{N}} \Cp \epsilon_2 = \alpha_N \Cp \FreeB{\alpha_N} \Cp\epsilon_2\\
        \alpha_N \Cp \alpha_{\FreeB{N}} \Cp Q_2 = \alpha_N \Cp  \FreeB{\alpha_N} \Cp Q_2
      \end{cases}
      \label{eq: comonad comultiplication law individual-token philosophy}
    \end{equation}
    since the projection maps $(\epsilon_2, Q)$ from a pullback are jointly monic (as always when computing a limit).
    \cref{eq: comonad comultiplication law individual-token philosophy}
    holds because of the commutative conditions found so far:
    \begin{align*}
      \alpha_N \Cp \alpha_{\FreeB{N}} \Cp \epsilon_2 & = \alpha_N \Cp \Id{\FreeB[2]{N}}\\
                                                                                      & = \Id{\FreeB{N}} \Cp \alpha_N\\
                                                                                      & = \alpha_N \Cp \epsilon_1 \Cp \alpha_N \\
                                                                                      & = \alpha_N \Cp \FreeB{\alpha_N} \Cp\epsilon_2\\
      \alpha_N \Cp \alpha_{\FreeB{N}} \Cp Q_2          & =  \alpha_N \Cp \alpha_{\FreeB{N}} \Cp Q_2 \Cp \Id{\Span_\bullet}\\
                                                                                      & = \alpha_N \Cp \alpha_{\FreeB{N}} \Cp \epsilon_2 \Cp Q_1\\
                                                                                      & = \alpha_N \Cp \Id{\FreeB[2]{N}} \Cp Q_1\\
                                                                                      & = \alpha_N \Cp Q_1 \Cp \Id{\Span_\bullet}\\
                                                                                      & = \alpha_N \Cp  \FreeB{\alpha_N} \Cp Q_2
    \end{align*}
    \item The left and right counit laws follow from a similar chain of reasoning, and all in all from the definition of the counit and comultiplication. \qedhere
  \end{itemize}
\end{proof}
As for the commutative case, we now want to provide a semantics that is the external
counterpart of~\cref{def: internal bound semantics individual token philosophy}.
\cref{def: non-local semantics collective-token philosphy} can be ported almost verbatim:
\begin{definition}[Non-local semantics -- individual-token philosophy]\label{def: non-local semantics individual-token philosphy}
  Let $N$ be a Petri net and let $\Semantics$ be a monoidal bicategory.
  A \emph{Petri net with a non-local semantics} is a couple $\NetSem{N}$ with
  $\Fun{N}$ a \emph{lax-monoidal-lax functor} $\Free{N} \to \Semantics$.

  A morphism $\NetSem{N} \to \NetSem{M}$ of Petri nets
  with non-local semantics is a strict monoidal functor $\Free{N} \xrightarrow{F} \Free{M}$
        making the obvious triangle commute.
  Overloading notation, we denote the category of Petri nets with a non-local semantics with $\PetriS{\Semantics}$.
\end{definition}
Yet, actually producing a lax-monoidal-lax functor $\Free{N} \to \Span$
akin to the one in~\cref{def: external bound semantics common token philosophy}
is not as easy:
As pointed out in~\cref{ex: net executions difference in causality},
in the commutative case, it does not really matter which object we apply a given morphism to,
but only how many objects of some sort are turned into objects of some other sort. This allowed us
to represent morphisms between anti-places using spans with the preimage of $\chi$ in their tips,
and kept things manageable at the level of intuition.

When non-trivial symmetries are around, the causal relationship between
morphisms becomes meaningful: for instance, $f \Cp g$ and $f \otimes g$ will, in general, act very differently on objects. Moreover, looking at the fibre of $\chi$ over a morphism generator $f$, we can describe it
quite explicitly as the set of all (possibly empty) stackings of string diagrams
\begin{equation*}
  \begin{tikzpicture}[yscale=.75]
  \twoAr{h}
  \step{\akasa[\sigma_E]}
  \step[-1]{\akasa[\sigma_W]}
  \up[4]{
    \step{\akasa[\sigma_{NE}]}
    \akasa[\sigma_N]
    \step[-1]{\akasa[\sigma{NW}]}
  }
  \down[4]{
    \step{\akasa[\sigma_{SE}]}
    \akasa[\sigma_S]
    \step[-1]{\akasa[\sigma_{SW}]}
  }
\end{tikzpicture}
\end{equation*}
where various symmetries $\sigma_W,\sigma_E, \sigma_{NE},\dots$ ``surround'' $f$
on each side. If $f$ is an identity, say over an object $A$, then $\chi^{-1}(1_A)$
coincides with the set of all symmetries of $\Free{N}$, making it clear that a
lax-monoidal-lax semantics for the individual-token case will not preserve identities strictly.
\subsection{External semantics via abstract machinery}\label{sec: external semantics via abstract machinery}
With such a great deal of technicality,
generalizing~\cref{def: internal bound semantics common token philosophy} ``manually''
can easily go wrong. As we did already in~\cite{Genovese2021a}, we solve the problem by inverting our reasoning:
We will use abstract machinery to obtain a lax-monoidal-lax functor from the counit $\FreeB{N} \to \Free{N}$
of the comonad in~\cref{prop: comonad individual-token philosophy}, in such a way that
the isomorphism with $\FreeB{N}$ will be guaranteed after internalizing.
\begin{definition}[Total category \cite{Pavlovic1997}]\label{def: total category}
  Let $\CategoryC$ be a 1-category, regarded as a locally discrete bicategory, and let $F: \CategoryC \to \Span$ be a lax-monoidal-lax functor.
  The \emph{total category} of $F$ is the 1-category $\Grothendieck{F}$ defined as the pullback (in $\Cat$)
  \begin{equation*}
    \begin{tikzcd}
  \Grothendieck{F} \ar[r]\ar[d, "\epsilon_F"'] \pb & \Span_\bullet\ar[d, "U"] \\
  \CategoryC \ar[r, "F"'] & \Span
\end{tikzcd}

  \end{equation*}
  where $\Span_\bullet$ is the bicategory of spans between pointed sets,
  and $U$ is the forgetful functor.
  More concretely, $\Grothendieck{F}$ is defined as the category\footnote{
    All 2-cells are identities, due to the 2-discreteness of $\CategoryC$.
  } where
  \begin{itemize}
    \item 0-cells of $\Grothendieck{F}$ are couples $(X,x)$ where
          $X$ is a 0-cell of $\CategoryC$ and $x \in FX$;
    \item 1-cells $(X,x) \to (Y,y)$ of $\Grothendieck{F}$ are couples $(f,s)$
          where $f: X \to Y$ is a 1-cell of $\CategoryC$ and $s \in S_{Ff}$ with
          $Ff(s) = (x,y)$. 
        \end{itemize}
        Representing a span as a function $(S,s) \to (X\times Y, (x,y))$ between (pointed) sets,
        a morphism is a pair $(f,s)$ such that $Ff : s\mapsto (x,y)$.
\end{definition}
\begin{definition}\label{def: gamma functor}
  Let $\CategoryC$ and $\CategoryD$ be 1-categories, and let $F: \CategoryD \to \CategoryC$ be a functor.
  There is a lax functor $\Gamma F: \CategoryC \to \Span$, with $\CategoryC$ regarded as a locally discrete bicategory,
  defined as follows:
  \begin{itemize}
    \item On objects, $C \in \CategoryC$ is mapped to the set $\Suchthat{D \in \CategoryD}{FD = C}$.
    \item On morphisms, $C \xrightarrow{f} C'$ in $\CategoryC$ is mapped to the span:
    \begin{equation*}
      \Suchthat{D \in \CategoryD}{FD = C} \xleftarrow{dom} \Suchthat{g \in \CategoryD}{Fg = f} \xrightarrow{cod} \Suchthat{D \in \CategoryD}{FD = C'}
    \end{equation*}
  \end{itemize}
\end{definition}
The following fact was first observed by J. Bénabou in \cite{Benabou1967} (see also \cite[Th.\@\xspace 5.4.5]{Loregian2020}), where instead of $\Span$ there is the bicategory of profunctors:
\begin{theorem}[\cite{Pavlovic1997}]\label{thm: equivalence slice category and functors to span}
  For any category $\CategoryC$, there is an equivalence of categories
  \begin{equation*}
    \Gamma: \Cat / \CategoryC \simeq \Cat_l [\CategoryC, \Span]: \Grothendieck{}
  \end{equation*}
  where the right hand side is the 1-category of lax monoidal functors and functional (i.e.\@\xspace whose components are all functions) natural transformations,
  and $\Grothendieck{}$, $\Gamma$ are defined as in~\cref{def: total category}
  and~\cref{def: gamma functor}, respectively. In particular, for $F : \CategoryD \to \CategoryC$,
  $\Grothendieck{\Gamma F}$ is isomorphic to $\CategoryD$.
\end{theorem}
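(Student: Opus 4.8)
The plan is to realize $\Gamma$ and $\Grothendieck{}$ as honest functors between the two $1$-categories in the statement and then to produce natural isomorphisms witnessing that each composite is isomorphic to the relevant identity functor; the closing ``in particular'' clause is precisely the object part of the isomorphism $\Grothendieck{\Gamma F}\cong\CategoryD$. Both functors are already essentially pinned down by~\cref{def: total category,def: gamma functor}, so the content is (i) checking they are well defined and functorial, and (ii) computing the two round-trips. On morphisms, $\Gamma$ sends a triangle $H:(\CategoryD,F)\to(\CategoryD',F')$ over $\CategoryC$ (so that $F'\circ H = F$) to the functional natural transformation whose component at $C$ is $D\mapsto HD$; dually, $\Grothendieck{}$ sends a functional natural transformation $\theta:G\Rightarrow G'$ to the functor over $\CategoryC$ acting by $(X,x)\mapsto(X,\theta_X x)$ on objects and by $(f,s)\mapsto(f,\theta_f s)$ on morphisms, where $\theta_f:S_{Gf}\to S_{G'f}$ is the function underlying the naturality $2$-cell of $\theta$ at $f$, and the target condition $G'f(\theta_f s)=(\theta_X x,\theta_{X'} y)$ is exactly the compatibility of $\theta_f$ with the span legs. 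Functoriality of both assignments is then immediate.

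For the first round-trip, unwinding the concrete description of~\cref{def: total category} applied to $\Gamma F$ shows that an object of $\Grothendieck{\Gamma F}$ is a pair $(C,D)$ with $FD=C$, and a morphism $(C,D)\to(C',D')$ is a pair $(f,g)$ with $g:D\to D'$ in $\CategoryD$ and $Fg=f$. In both cases the $\CategoryC$-datum is completely determined by the $\CategoryD$-datum, so the evident forgetful assignment $(C,D)\mapsto D$, $(f,g)\mapsto g$ is an isomorphism of categories $\Grothendieck{\Gamma F}\xrightarrow{\ \cong\ }\CategoryD$, and it commutes strictly with the projections to $\CategoryC$ since $\epsilon_{\Gamma F}(C,D)=C=FD$. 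This gives $\Grothendieck{\Gamma F}\cong\CategoryD$ in $\Cat/\CategoryC$, naturally in $F$, and in particular the object-level ``in particular'' clause.

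For the second round-trip, fix a lax functor $G$ and compute $\Gamma(\epsilon_G)$. Its value at an object $C$ is the fibre $\Suchthat{(X,x)\in\Grothendieck{G}}{X=C}$, canonically bijective with $GC$ via $(C,x)\mapsto x$; its value at a morphism $f:C\to C'$ is the span whose apex is the set of morphisms of $\Grothendieck{G}$ lying over $f$, namely $\Suchthat{(f,s)}{s\in S_{Gf}}$, canonically bijective with $S_{Gf}$, and whose two legs send such a morphism to its source and target, i.e.\@\xspace to the two components of $Gf(s)$. Hence $\Gamma(\epsilon_G)f\cong Gf$ as spans, compatibly with the legs, and these bijections assemble into an invertible functional natural transformation $\Gamma\Grothendieck{G}\cong G$. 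That it is a morphism in $\Cat_l[\CategoryC,\Span]$ follows by matching comparison cells: the identity laxator of $\Gamma(\epsilon_G)$ is the inclusion of identity morphisms $\{(C,x)\}\hookrightarrow\{(1_C,s)\}$, which corresponds under the bijections to the unit $\eta$ of $G$, while the composition laxator of $\Gamma(\epsilon_G)$, sending a composable pair to its composite in $\Grothendieck{G}$, corresponds to the laxator $\mu$ of $G$.

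The main obstacle, and the only place the $2$-dimensional data genuinely intervene, is the coherence bookkeeping hidden in the two well-definedness checks. First, one must verify that $\Gamma F$ is an honest lax functor: its comparison cells are the ``insert identities'' map $D\mapsto 1_D$ and the ``compose witnesses'' map $(g,g')\mapsto g\Cp g'$, and the lax unit and associativity axioms for these reduce---because $\CategoryC$ is locally discrete, so every mediating $2$-cell is an identity---to the unit and associativity laws of $\CategoryD$. Second, and symmetrically, one must verify that $\Grothendieck{G}$ is a genuine category: composition $(f,s)\Cp(f',s')=(f\Cp f',\mu_{f,f'}(s,s'))$ is well typed because the cod/dom matching at $y$ makes $(s,s')$ a legal element of the pullback feeding the laxator $\mu$, and the associativity and unitality of this composition are \emph{exactly} the lax associativity and unit coherences of $G$, read as equalities of functions in $\Span$ (using that the comparison cells of $G$ are functional). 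Once these two facts are in place the isomorphisms above are forced, and the equivalence follows; the monoidal refinement needed for $\Comm{N}$ and $\Free{N}$ follows by tracking the monoidal laxators in exactly the same fashion.
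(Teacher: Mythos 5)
The paper does not actually prove this theorem: it is imported from B\'enabou and Pavlovi\'c--Abramsky with a citation, so there is no ``paper proof'' to compare against. Your argument is a correct, self-contained verification of the cited result, and it follows the standard route: realize $\Gamma$ and $\Grothendieck{}$ as functors, check that $\Gamma F$ is a lax functor (unit/composition comparison cells given by inserting identities and composing witnesses, with coherence reducing to the category axioms of $\CategoryD$) and that $\Grothendieck{G}$ is a category (associativity and unitality being exactly the lax coherences of $G$), and then compute both round-trips, obtaining the strict isomorphism $\Grothendieck{\Gamma F}\cong\CategoryD$ over $\CategoryC$ and the invertible functional transformation $\Gamma\Grothendieck{G}\cong G$ compatible with the laxators. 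Your identification of the coherence bookkeeping as the only genuinely two-dimensional content is exactly right. One small caveat: the statement's phrase ``lax monoidal functors'' is a slip for ``lax functors'' (cf.~\cref{def: gamma functor}, which produces a plain lax functor); the monoidal refinement you defer to the last sentence is not part of the cited theorem, and is the one place where your proof waves its hands --- harmless here, but worth flagging if the equivalence is to be used in its monoidal form as in \cref{thm: equivalence bound semantics collective-token philosophy,thm: equivalence bound semantics individual-token philosophy}.
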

Relying upon \cref{def: gamma functor}, we are ready to define our external semantics.
\begin{definition}[External bound semantics -- individual-token philosphy]\label{def: external bound semantics individual token philosophy}
  We define a lax-monoidal-lax functor $\Fun{N}: \Free{N} \to \Span$ as $\Gamma \epsilon_N$,
  where $\epsilon_N : \FreeB{N} \to  \Free{N}$ is the functor sending any anti-place to the monoidal
  unit, and generating morphisms to themselves.
\end{definition}
\begin{proposition}
  The functor of~\cref{def: external bound semantics individual token philosophy} is lax-monoidal-lax.
\end{proposition}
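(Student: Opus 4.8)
The plan is to split the claim into its two halves. The functor $\Fun{N}=\Gamma\epsilon_N$ is a lax functor essentially for free: regarding $\Free{N}$ as a locally discrete bicategory, $\epsilon_N\colon\FreeB{N}\to\Free{N}$ is an object of $\Cat/\Free{N}$, so \cref{thm: equivalence slice category and functors to span} (concretely, \cref{def: gamma functor}) exhibits $\Gamma\epsilon_N$ as a bona fide lax functor $\Free{N}\to\Span$, whose composition laxator sends a composable pair $(g,g')$ of morphisms of $\FreeB{N}$ to their composite $g\Cp g'$. What remains is to upgrade this to a \emph{lax monoidal} structure relative to the symmetric strict monoidal structure of $\Free{N}$ and the cartesian one of $\Span$; I would obtain it by transporting the strict monoidal structure of $\FreeB{N}$ along the fibres of $\epsilon_N$, i.e.\@\xspace as the monoidal refinement of \cref{thm: equivalence slice category and functors to span}.

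Recall that $\Gamma\epsilon_N$ sends an object $A$ to the fibre $\Suchthat{D\in\FreeB{N}}{\epsilon_N D=A}$ and a morphism $f$ to the span with apex $\Suchthat{g\in\FreeB{N}}{\epsilon_N g=f}$. The object laxator is the function
\begin{equation*}
  \Suchthat{D}{\epsilon_N D=A}\times\Suchthat{E}{\epsilon_N E=B}\longrightarrow\Suchthat{F}{\epsilon_N F=A\otimes B},\qquad(D,E)\longmapsto D\otimes E,
\end{equation*}
which is well defined precisely because $\epsilon_N$ is \emph{strict} monoidal, so that $\epsilon_N(D\otimes E)=\epsilon_N D\otimes\epsilon_N E=A\otimes B$; the unit comparison $\{\ast\}\to\Suchthat{F}{\epsilon_N F=\TensorUnit}$ selects the monoidal unit of $\FreeB{N}$. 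On morphisms, the same recipe $(g,h)\mapsto g\otimes h$ supplies the span-level component of the laxator, again well defined by strict monoidality, and its naturality in the chosen morphisms is just the bifunctoriality of $\otimes$ in $\FreeB{N}$.

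Finally I would verify the coherence axioms. The associativity and unit constraints of the lax monoidal structure collapse onto the strict associativity and unitality of $\otimes$ in $\FreeB{N}$ together with the functoriality of $\epsilon_N$, so they hold on the nose. The step I expect to carry the real weight is the compatibility between the monoidal laxator $(D,E)\mapsto D\otimes E$ and the composition laxator $(g,g')\mapsto g\Cp g'$---that is, that $\Gamma\epsilon_N$ is genuinely lax-monoidal-lax rather than merely lax in each structure separately---which I would reduce to the middle-four interchange $(g\otimes h)\Cp(g'\otimes h')=(g\Cp g')\otimes(h\Cp h')$ holding strictly in $\FreeB{N}$. It is worth recording that the resulting laxators are genuinely non-invertible: as noted after \cref{def: external bound semantics individual token philosophy}, $\chi^{-1}(1_A)$ is the entire set of symmetries of $\Free{N}$ rather than a single point, and anti-places may be distributed between the two tensor factors in several ways without altering their concatenation, so $\Fun{N}$ is lax and not strong, exactly as expected.
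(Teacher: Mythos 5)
Your proof is correct and follows essentially the same route as the paper: laxity comes for free from $\Gamma$, and the monoidal laxator is the tensor of $\FreeB{N}$ transported along the strict monoidal $\epsilon_N$ (the paper phrases this as concatenation after identifying each fibre with $\Str{S}$), with the $2$-cell on apexes given by $(g,h)\mapsto g\otimes h$. Your write-up is in fact slightly more careful than the paper's, making explicit the unit comparison, the interchange-law compatibility between the two laxators, and the correct direction of the laxating $2$-cell, all of which the paper leaves implicit.
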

\begin{proof}
  That $\Fun{N}$ is lax is true by definition. We need to prove that it laxly preserves the monoidal structure. This is readily done by noticing that
  $\Fun{N}$ sends all objects to the set of multisets on the set of object generators of $\FreeB{N}$
  which are of the form $\Bwd{A}$. This set is isomorphic to $\Str{S}$, the set of objects of $\Free{N}$,
  and we obtain an obvious laxator $\Str{S} \times \Str{S} \to \Str{S}$ by concatenating strings.

  The laxator diagram commutes only up to a 2-cell, which is to be expected given that we are working with lax functors. This 2-cell is readily obtained from the following inclusion
  \begin{equation*}
    \Suchthat{h \in \CategoryD}{Fh = f \otimes g} \subseteq  \Suchthat{h \in \CategoryD}{Fh = f} \times  \Suchthat{h \in \CategoryD}{Fh = g}
  \end{equation*}
  between the span tips of $\Fun{N}(f \otimes g)$ and $\Fun{N}f \times \Fun{N}g$, respectively.
\end{proof}
\begin{theorem}\label{thm: equivalence bound semantics individual-token philosophy}
  Let $\NetSem{N}$ be an object of $\PetriFreeBound$.
  The category $\FreeB{N}$ of~\cref{def: internal bound semantics common token philosophy} is isomorphic to the category resulting from the following pullback in $\Cat$:
  \begin{equation*}
    \begin{tikzcd}
  \GrothendieckS{N} \ar[r]\ar[d, "\epsilon_{\Fun{N}}"'] \pb & \Span_\bullet\ar[d, "U"] \\
  \Free{N} \ar[r, "\Fun{N}"'] & \Span
\end{tikzcd}

  \end{equation*}
  where $\Span_\bullet$ is the bicategory of spans between pointed sets,
  and $U$ is the forgetful functor. Explicitly:
  \begin{itemize}
    \item Objects of $\GrothendieckS{N}$ are couples $(X,x)$ where
          $X$ is an object of $\CategoryC$ and $x \in \Fun{N}X$.

    \item Morphisms $(X,x) \to (Y,y)$ of $\GrothendieckS{N}$ are couples $(f,s)$
          with $f: X \to Y$ a morphism of $\CategoryC$ and $s \in S_{\Fun{N}f}$
          such that $\Fun{N}f(s) = (x,y)$.
  \end{itemize}
\end{theorem}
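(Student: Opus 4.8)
The plan is to deduce the statement directly from Pavlović's equivalence (Theorem \ref{thm: equivalence slice category and functors to span}), so that the delicate manipulation of symmetries is entirely absorbed into the abstract machinery---this being precisely the reason for defining the external semantics the way we did in Definition \ref{def: external bound semantics individual token philosophy}.

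First, I would note that the pullback square in the statement is, by Definition \ref{def: total category}, exactly the total category $\Grothendieck{\Fun{N}}$ of the lax-monoidal-lax functor $\Fun{N}: \Free{N} \to \Span$: the two squares share the cospan $\Free{N} \xrightarrow{\Fun{N}} \Span \xleftarrow{U} \Span_\bullet$, and the explicit description of $0$- and $1$-cells quoted in the statement is word-for-word that of Definition \ref{def: total category}. Thus $\GrothendieckS{N} = \Grothendieck{\Fun{N}}$, and it only remains to identify this total category with $\FreeB{N}$.

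Next, I would recall from Definition \ref{def: external bound semantics individual token philosophy} that $\Fun{N}$ is \emph{by definition} equal to $\Gamma\epsilon_N$, where $\epsilon_N: \FreeB{N} \to \Free{N}$ is the functor collapsing every anti-place to the monoidal unit and fixing every generating morphism. Since this $\epsilon_N$ is an honest object of the slice $\Cat/\Free{N}$, available prior to---and independently of---the comonad structure of Proposition \ref{prop: comonad individual-token philosophy}, using it here introduces no circularity. Substituting gives $\GrothendieckS{N} = \Grothendieck{\Gamma\epsilon_N}$, and the final clause of Theorem \ref{thm: equivalence slice category and functors to span}, namely $\Grothendieck{\Gamma F} \cong \CategoryD$ for any $F: \CategoryD \to \CategoryC$, specializes at $F = \epsilon_N$ to $\Grothendieck{\Gamma\epsilon_N} \cong \FreeB{N}$. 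Chaining the two identifications yields the desired isomorphism $\FreeB{N} \cong \GrothendieckS{N}$.

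The real obstacle is the one this argument sidesteps rather than confronts: a direct, element-wise construction of the isomorphism would require matching, for each generator, all the surrounding symmetries (the ``stackings of string diagrams'' discussed above) and checking compatibility with composition and tensor across every choice of symmetry---exactly the bookkeeping that makes a hands-on treatment of the individual-token case unmanageable. Accordingly, the only point I would still verify with care is that the cospan of the displayed pullback coincides on the nose with the total-category cospan, so that the strict pullback in $\Cat$---and hence the on-the-nose isomorphism guaranteed by Pavlović's theorem---applies; since the statement asks only for an isomorphism in $\Cat$, no compatibility with the monoidal or symmetric structure needs to be addressed at this stage.
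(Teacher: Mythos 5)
Your proposal is correct and follows the same route as the paper, whose entire proof is the remark that the statement is Theorem~\ref{thm: equivalence slice category and functors to span} after unrolling definitions: you simply make explicit the identifications $\GrothendieckS{N} = \Grothendieck{\Gamma\epsilon_N}$ and $\Grothendieck{\Gamma\epsilon_N}\cong\FreeB{N}$ that the paper leaves implicit. Your added observation that $\epsilon_N$ is defined directly in Definition~\ref{def: external bound semantics individual token philosophy}, so no circularity with Proposition~\ref{prop: comonad individual-token philosophy} arises, is a worthwhile clarification but not a departure from the paper's argument.
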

\begin{proof}
  Unrolling definitions, this is just~\cref{thm: equivalence slice category and functors to span}.
\end{proof}

\section{Conclusion and future work}\label{sec: conclusion and future work}
In this work, we proved how the functorial semantics approach developed
in~\cite{Genovese2020} to model guarded nets can be generalized to include
other traditionally familiar constructions such as bounding. To achieve such a result,
we refined a technique already developed in~\cite{Genovese2021a} by endowing nets with a
\emph{lax-monoidal-lax} functorial semantics. This improvement is a conceptually relevant
leap: if in~\cite{Genovese2020} strong monoidal functors endowed tokens with properties that
could be only locally true, in~\cite{Genovese2021a} and here tokens can be endowed with properties
about the global state of the net (such as distributions of tokens in
other places). The laxity requirement, which translates into the possibility to consider ensembles of tokens as a single thing, builds a tension between the intrinsic local nature of tokens and their global properties.

Mathematically, our main contribution consists in proving how the formalism of functorial semantics is mature enough to scale to complicated situations, such as non-local semantics, in the presence of symmetries; this requires adopting
a higher-categorical point of view, and resorting to the entirely abstract description
of many constructions that we were able to implement naively in the collective-token
case.

As for directions for future work, we want to investigate what else our formalism
can cover, hierarchic nets~\cite{Jensen2009} and nets
with inhibitor arcs~\cite{Zaitsev2013, Zaitsev2012} being the most promising candidates. In general,
we are now convinced of the following fact, which we consider a
significative conceptual contribution:
\begin{center}
  \emph{
    Studying the categorical semantics of extensions of Petri nets amounts
    to classifying lax functors $\CategoryC \to \Span$ from a
    free (commutative, symmetric) monoidal category $\CategoryC$.
  }
\end{center}

\printbibliography
\typeout{get arXiv to do 4 passes: Label(s) may have changed. Rerun}
\end{document}